\numberwithin{equation}{section}
\theoremstyle{plain}
\newtheorem{thm}{Theorem}[section]
\newtheorem{prop}[thm]{Proposition}
\newtheorem{lem}[thm]{Lemma}
\newtheorem*{ques*}{Question}
\theoremstyle{definition}
\newtheorem{defn}[thm]{Definition}
\newtheorem{nota}[thm]{Notation}
\newtheorem{exmp}[thm]{Example}
\newtheorem{rem}[thm]{Remark}
\newtheorem{fact}[thm]{Fact}
\newtheorem{conj}[thm]{Conjecture}
\begin{document}


\title[Chains of semidualizing modules]
{Chains of semidualizing modules}


     \author[E. Amanzadeh]{Ensiyeh Amanzadeh}

\address{School of Mathematics, Institute for Research in
Fundamental Sciences (IPM),  P.O. Box: 19395-5746,  Tehran,  Iran. }

\email{en.amanzadeh@gmail.com}


\keywords{Bass number, semidualizing, suitable chain, $\mathrm{Tor}$-independent.}

 \subjclass[2010]{13D05, 13D07}


\begin{abstract}
Let $(R, \mathfrak{m}, k)$ be a commutative Noetherian local ring.
We study the suitable chains of semidualizing $R$-modules.
We prove  that when $R$ is Artinian, the existence of
a suitable chain of semidualizing modules of length $n=\mathrm{max}\,\{\,i\geqslant 0\ |\  \mathfrak{m}^{i}\neq 0\,\}$
implies that the the Poincar$\acute{\mathrm{e}}$ series of $k$ and the  Bass series of $R$ have  very specific forms.
Also, in this case we show that the Bass numbers of $R$ are strictly increasing. This gives an insight into
the question of Huneke about the Bass numbers of $R$.
\end{abstract}


\maketitle

\section{Introduction} \label{sec1}
In this paper $R$ is a commutative Noetherian local ring with maximal ideal $\mathfrak{m}$ and residue field $k$.
Foxby \cite{f},  Vasconcelos \cite{v} and Golod \cite{g} independently initiated the study of
semidualizing modules.
A finite (i.e. finitely generated) $R$-module $C$ is called \emph{semidualizing} if the natural
homothety map  $\chi_C^R: R \longrightarrow \mathrm{Hom}_R(C,  C)$ is an isomorphism
and $\mathrm{Ext}^{\geqslant1}_R(C,  C)=0$ (see \cite[Definition 1.1]{hj}).
Trivial examples of semidualizing $R$-modules are $R$ itself and a dualizing $R$-module when one exists.
The set of all isomorphism classes of semidualizing $R$-modules is denoted by $\mathfrak{G}_0(R)$,
and the isomorphism class of a semidualizing $R$-module $C$ is denoted $[C]$.
The set $\mathfrak{G}_0(R)$ has a rich structure, for instance, it comes equipped with
an ordering based on the notion total reflexivity.
For semidualizing $R$-modules $B$ and $C$, we write $[C] \trianglelefteq [B]$ whenever
$B$ is totally $C$-reflexive.
In \cite{gerko}, Gerko defines chains in $\mathfrak{G}_0(R)$. A \emph{chain} in
$\mathfrak{G}_0(R)$ is a sequence
$[C_n]\trianglelefteq \cdots \trianglelefteq[C_1] \trianglelefteq [C_0]$,
and such a chain has length $n$ if $[C_i]\neq[C_j]$ whenever $i\neq j$.
In \cite{s-w1}, Sather-Wagstaff uses the length of chains in $\mathfrak{G}_0(R)$
to provide a lower bound for the cardinality of $\mathfrak{G}_0(R)$.

The well-known result of Foxby \cite{f}, Reiten \cite{reiten} and Sharp \cite{s} is that
$R$ admits a dualizing module if and only if $R$ is Cohen-Macaulay
and a homomorphic image of a local Gorenstein ring.
Then Jorgensen et.\! al. \cite{jls-w}, characterize the Cohen-Macaulay local rings which admit
dualizing modules and non-trivial semidualizing modules.
Recently, Amanzadeh and Dibaei \cite{ad},  characterize the Cohen-Macaulay local rings which admit
 dualizing modules and  suitable chains of semidualizing modules.

In \cite{gerko}, Gerko has shown that, when $R$ is Artinian, the existence of
collections of strongly $\mathrm{Tor}$-independent semidualizing $R$-modules
(this condition conjecturally equivalent to the existence of a corresponding long
chain of semidualizing modules)
implies that the Poincar$\acute{\mathrm{e}}$ series of $k$ has a very specific form.
We show that the \cite[Conjecture 4.6]{gerko} holds true for suitable chains
in $\mathfrak{G}_0(R)$ (see Theorem~\ref{T1}). This result implies that
the set $\mathfrak{G}_0(R)$ can not contain suitable chains of arbitrary
length. Indeed, the length of a suitable chain in $\mathfrak{G}_0(R)$ is less than the number
$l=\mathrm{min}\,\{\,i> 0\ |\  \mathfrak{m}^{i}=0\,\}$.

There are several works which were motivated by the following questions of Huneke about
the Bass numbers $\mu^i_R(R)=\mathrm{rank}_k(\mathrm{Ext}^i_R(k, R))$.
For instance, see \cite{bs-wy}, \cite{csv}, \cite{jl} and \cite{s-w3}.
However, each of the following questions is still open in general.

\begin{ques*}
Let $R$ be a Cohen-Macaulay ring.
\begin{itemize}
\item[(a)]If the sequence $\{\mu^i_R(R)\}$ is bounded above by a polynomial in $i$, must $R$ be Gorenstein?
\item[(b)]If $R$ is not Gorenstein, must the sequence $\{\mu^i_R(R)\}$ grow exponentially?
\end{itemize}
\end{ques*}

In \cite{s-w3}, Sather-Wagstaff has proved that if there exists a chain of semidualizing
$R$-modules of length $n$, then the Bass series of $R$
is a product of $n$ power series with positive integer coefficients, and the Bass numbers
of $R$ are bounded below by a polynomial of degree $n-1$.
In Theorem~\ref{T2}, we prove that an Artinian ring $R$ with $\mathfrak{m}^{n+1}= 0$ is $SD(n)$-full
if and only if there is a suitable chain in $\mathfrak{G}_0(R)$ of length $n$.
Then by this result and some results of \cite{gerko}, we show that the existence of a
suitable chain in $\mathfrak{G}_0(R)$ implies that the Bass series of $R$ has very
specific form and the sequence $\{\mu^i_R(R)\}$
is strictly increasing (see Proposition~\ref{Bass series} and Remark~\ref{remark Bass}).

\section{Preliminaries} \label{sec2}

This section contains definitions and background material.
\begin{defn}\label{reflexive} (\cite[Definition 2.7]{hj} and \cite[Theorem 5.2.3 and Definition 6.1.2]{s-w2})
Let $C$ be a semidualizing $R$-module. A finite $R$-module $M$ is
\emph{totally} $C$-\emph{reflexive} when it satisfies the following conditions\,:
\begin{itemize}
\item[(i)] the natural homomorphism
$\delta^C_M: M \longrightarrow \mathrm{Hom}_R(\mathrm{Hom}_R(M, C), C)$
is an isomorphism, and
\item[(ii)]
$\mathrm{Ext}^{\geqslant1}_R(M,  C)=0=\mathrm{Ext}^{\geqslant1}_R(\mathrm{Hom}_R(M, C),  C)$.
\end{itemize}
\end{defn}

\begin{fact}
Define the order $\trianglelefteq$ on $\mathfrak{G}_0(R)$. For $[B], [C]\in \mathfrak{G}_0(R)$,
write  $[C]\trianglelefteq[B]$ when $B$ is totally $C$-reflexive (see, e.g., \cite{s-w1}).
This relation is reflexive and antisymmetric \cite[Lemma 3.2]{fs-w1},
but it is not known whether it is transitive in general.
Also, write $[C]\vartriangleleft[B]$ when $[C]\trianglelefteq [B]$ and $[C]\neq[B]$.

In the case  $D$ is a dualizing $R$-module, one has $[D]\trianglelefteq [B]$
for any semidualizing $R$-module $B$, by \cite[(V.2.1)]{h}.

If $[C]\trianglelefteq[B]$ then $\mathrm{Hom}_R(B, C)$ is a semidualizing and
$[C]\trianglelefteq[\mathrm{Hom}_R(B, C)]$ (\cite[Theorem 2.11]{c2}).
\end{fact}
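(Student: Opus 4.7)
Write $B^\dagger := \mathrm{Hom}_R(B,C)$. The plan is to establish the two parts of the conclusion in order: first that $B^\dagger$ is totally $C$-reflexive (i.e., $[C]\trianglelefteq[B^\dagger]$), and then that $B^\dagger$ is semidualizing. Throughout I would exploit the three facts packaged in the hypothesis that $B$ is totally $C$-reflexive: $\mathrm{Ext}^{\geqslant 1}_R(B,C)=0$, $\mathrm{Ext}^{\geqslant 1}_R(B^\dagger,C)=0$, and the biduality isomorphism $\delta^C_B : B \xrightarrow{\cong} \mathrm{Hom}_R(B^\dagger,C)$, together with the semidualizing data for $B$, namely $R \xrightarrow{\cong} \mathrm{Hom}_R(B,B)$ and $\mathrm{Ext}^{\geqslant 1}_R(B,B)=0$.

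For the total $C$-reflexivity of $B^\dagger$, the two Ext-vanishing conditions of Definition~\ref{reflexive} are essentially immediate. The vanishing $\mathrm{Ext}^{\geqslant 1}_R(B^\dagger,C)=0$ is already given; and transporting along the isomorphism $\delta^C_B$ identifies $\mathrm{Ext}^{\geqslant 1}_R(\mathrm{Hom}_R(B^\dagger,C),C)$ with $\mathrm{Ext}^{\geqslant 1}_R(B,C)$, which vanishes. For the biduality map $\delta^C_{B^\dagger}$, I would invoke naturality of $\delta^C$ to identify it with $\mathrm{Hom}_R(\delta^C_B,C)$, which is an isomorphism because $\delta^C_B$ is.

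For the semidualizing property of $B^\dagger$, the cleanest argument passes through the derived category. The Ext-vanishing in the hypotheses lets me identify $B^\dagger$ with $\mathbf{R}\mathrm{Hom}_R(B,C)$ and $B$ with $\mathbf{R}\mathrm{Hom}_R(B^\dagger,C)$ up to quasi-isomorphism, yielding the canonical chain
\begin{align*}
\mathbf{R}\mathrm{Hom}_R(B^\dagger,B^\dagger)
&\simeq \mathbf{R}\mathrm{Hom}_R(B^\dagger, \mathbf{R}\mathrm{Hom}_R(B,C)) \\
&\simeq \mathbf{R}\mathrm{Hom}_R(B^\dagger \otimes^{\mathbf{L}}_R B,\, C) \\
&\simeq \mathbf{R}\mathrm{Hom}_R(B, \mathbf{R}\mathrm{Hom}_R(B^\dagger,C)) \\
&\simeq \mathbf{R}\mathrm{Hom}_R(B,B) \simeq R,
\end{align*}
via derived Hom-tensor adjunction, the swap isomorphism, and the semidualizing property of $B$. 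Reading off $H^0$ gives $\mathrm{Hom}_R(B^\dagger,B^\dagger)\cong R$, while vanishing of cohomology in positive degrees yields $\mathrm{Ext}^{\geqslant 1}_R(B^\dagger,B^\dagger)=0$.

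The main obstacle is verifying that the resulting degree-zero isomorphism is in fact the homothety $\chi^R_{B^\dagger}:R\to\mathrm{Hom}_R(B^\dagger,B^\dagger)$, rather than merely an abstract isomorphism. I would handle this by tracing $1 \in R$ through each canonical map in the chain, using that multiplication by $r$ on $B$ corresponds, under the identifications above, to multiplication by $r$ on $B^\dagger$. Once this naturality bookkeeping is complete, the remaining assertions follow at once from the derived isomorphism.
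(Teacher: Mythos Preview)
The paper does not supply its own proof of this Fact; it records the assertions with citations (\cite[Lemma~3.2]{fs-w1}, \cite[(V.2.1)]{h}, and \cite[Theorem~2.11]{c2}) and moves on. Your write-up addresses only the final clause, and the argument you give is essentially the one in Christensen's cited \cite[Theorem~2.11]{c2}: pass to the derived category, use $\mathrm{Ext}$-vanishing to replace $\mathrm{Hom}$ by $\mathbf{R}\mathrm{Hom}$, and run the adjunction/swap chain to reach $\mathbf{R}\mathrm{Hom}_R(B,B)\simeq R$. So in content you are aligned with the source the paper defers to.

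One small imprecision worth fixing: you say you ``identify'' $\delta^C_{B^\dagger}$ with $\mathrm{Hom}_R(\delta^C_B,C)$ by naturality, but these maps point in opposite directions ($B^\dagger \to B^{\dagger\dagger\dagger}$ versus $B^{\dagger\dagger\dagger}\to B^\dagger$). The correct statement is the triangle identity $\mathrm{Hom}_R(\delta^C_B,C)\circ \delta^C_{B^\dagger}=\mathrm{id}_{B^\dagger}$, from which the invertibility of $\delta^C_{B^\dagger}$ follows once $\delta^C_B$ is an isomorphism. Your conclusion is right; only the stated mechanism needs adjusting. The homothety bookkeeping you flag at the end is indeed necessary and your plan to trace $1_R$ through the canonical isomorphisms is the standard way to handle it.
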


The following notations are taken from \cite{s-w1}.

\begin{nota}\label{B_i}
Assume that  $[C_n]\vartriangleleft \cdots \vartriangleleft[C_1] \vartriangleleft [C_0]$
is a chain in $\mathfrak{G}_0(R)$.
For each $i\in [n]=\{1,\cdots, n\}$ set $B_i=\mathrm{Hom}_R(C_{i-1}, C_i)$.
For each sequence of integers $\mathbf{i} = \{i_1, \cdots , i_j\}$  with $j \geqslant 1$
and $1\leqslant i_1 <  \cdots  < i_j \leqslant n$, set
$B_{\mathbf{i}}=B_{i_1}\otimes_R\cdots \otimes_R B_{i_j}$.
($B_{\{i_1\}} =B_{i_1}$ and set $B_\emptyset=C_0$).
\end{nota}

\begin{fact}\label{f1}
If $[C_n]\vartriangleleft \cdots \vartriangleleft[C_1] \vartriangleleft [C_0]$
is a chain in $\mathfrak{G}_0(R)$ such that $C_0\cong R$, then by \cite[Corollary 3.3]{gerko}, for each
$i\in[n]$ we have the following isomorphisms
$$C_i\cong C_0 \otimes_R \mathrm{Hom}_R(C_0,  C_1) \otimes_R \cdots \otimes_R\mathrm{Hom}_R(C_{i-1},  C_i)
\cong B_1\otimes_R \cdots \otimes_R B_i\,.$$
\end{fact}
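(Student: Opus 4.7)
The plan is induction on $i$. The base case $i = 0$ is trivial ($C_0 \cong C_0$), and the inductive step reduces the telescoping identity to the single-step isomorphism
$$C_i \;\cong\; C_{i-1}\otimes_R\mathrm{Hom}_R(C_{i-1},C_i) \;=\; C_{i-1}\otimes_R B_i,$$
combined with the inductive hypothesis applied to $C_{i-1}$. Once $C_0\cong R$ is inserted and $B_1 = \mathrm{Hom}_R(R,C_1)\cong C_1$, the middle and rightmost expressions of the statement match up, so the whole task is confined to proving this single-step iso.

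The heart of the matter is therefore the following general lemma: if $B$ and $C$ are semidualizing $R$-modules with $[C]\trianglelefteq[B]$, then the natural evaluation morphism
$$\omega\colon B\otimes_R\mathrm{Hom}_R(B,C)\longrightarrow C,\qquad b\otimes f\mapsto f(b)$$
is an isomorphism. My strategy is to lift $\omega$ to the derived category. The totally $C$-reflexive hypothesis gives $\mathrm{Ext}^{\geqslant1}_R(B,C)=0$, so $\mathbf{R}\mathrm{Hom}_R(B,C)\simeq\mathrm{Hom}_R(B,C)$, and the biduality $\delta^C_B$ together with the vanishing of $\mathrm{Ext}^{\geqslant1}_R(\mathrm{Hom}_R(B,C),C)$ upgrades to a derived biduality $B\simeq\mathbf{R}\mathrm{Hom}_R(\mathbf{R}\mathrm{Hom}_R(B,C),C)$. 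Combining this with the semidualizing identity $\mathbf{R}\mathrm{Hom}_R(C,C)\simeq R$ and the swap isomorphism $\mathbf{R}\mathrm{Hom}_R(B\otimes^{\mathbf{L}}_R M,C)\simeq \mathbf{R}\mathrm{Hom}_R(M,\mathbf{R}\mathrm{Hom}_R(B,C))$ applied with $M=\mathrm{Hom}_R(B,C)$ should identify $B\otimes^{\mathbf{L}}_R\mathrm{Hom}_R(B,C)$ with $C$ in the derived category.

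The main obstacle is the Tor-vanishing $\mathrm{Tor}^R_{\geqslant1}(B,\mathrm{Hom}_R(B,C))=0$, which is needed to descend the derived isomorphism to an ordinary one. Total $C$-reflexivity supplies Ext-vanishing directly but not Tor-vanishing, so extracting this I would use the fact recorded just after Notation~\ref{B_i} that $\mathrm{Hom}_R(B,C)$ is itself semidualizing with $[C]\trianglelefteq[\mathrm{Hom}_R(B,C)]$, combined with a Foxby-class-style argument that exchanges the roles of Ext and Tor under the dagger duality. With the Tor-vanishing in hand, taking $H_0$ of the derived isomorphism recovers $\omega$ as an actual isomorphism of modules, and feeding this back into the induction produces the claimed isomorphism in both forms.
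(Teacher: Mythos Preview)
The paper does not prove this statement; it records it as a Fact with a bare citation to \cite[Corollary~3.3]{gerko}. Your proposal is therefore a reconstruction of the cited result rather than a parallel argument, and your inductive reduction to the single-step evaluation isomorphism $C_{i-1}\otimes_R B_i\to C_i$ is exactly the content of that corollary.

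One caution on your derived-category sketch: the swap identity together with $\mathbf{R}\mathrm{Hom}_R(C,C)\simeq R$ shows only that $\mathbf{R}\mathrm{Hom}_R(-,C)$ sends both $B\otimes^{\mathbf L}_R\mathrm{Hom}_R(B,C)$ and $C$ to $R$; to conclude that the two objects themselves are isomorphic you still need $C$-reflexivity of the derived tensor (so biduality applies) or the Tor-vanishing you flagged. The cleanest way to close this is precisely the Foxby-class statement you allude to: for semidualizing $B$ and $C$, the condition $[C]\trianglelefteq[B]$ is equivalent to $C$ lying in the Bass class with respect to $B$, and membership in that class \emph{by definition} furnishes both $\mathrm{Tor}^R_{>0}(B,\mathrm{Hom}_R(B,C))=0$ and the evaluation isomorphism $B\otimes_R\mathrm{Hom}_R(B,C)\cong C$. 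That equivalence is the substantive input behind Gerko's corollary (ultimately from \cite{c2}), so your instinct is correct, but the step should be stated and cited rather than left as a gesture.
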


For a semidualizing $R$-module $C$, set ${(-)}^{\dag_C}=\mathrm{Hom}_R(-,  C)$.

\begin{defn}\label{suitable}(\cite[Remark 3.4]{s-w1} and \cite[Definition 3.1]{ad})
Let  $[C_n]\vartriangleleft \cdots \vartriangleleft[C_1] \vartriangleleft [C_0]$ be
a chain in $\mathfrak{G}_0(R)$ of length $n$.
For each sequence of integers $\mathbf{i} = \{i_1, \cdots , i_j\}$ such that
$j \geqslant 0$ and $1\leqslant i_1 <  \cdots  < i_j \leqslant n$, set
$C_{\mathbf{i}}= C_0^{\dag_{C_{i_1}}\dag_{C_{i_2}}\cdots\dag_{C_{i_j}}}$.
(When $j = 0$, set  $C_{\mathbf{i}} = C_\emptyset= C_0$).

We say that the above chain is \emph{suitable} if  $C_0=R$ and
$C_{\mathbf{i}}$ is totally $C_t$-reflexive, for all
$\mathbf{i}$ and $t$ with $i_j\leqslant t \leqslant n$.

Note that if $[C_n]\vartriangleleft \cdots \vartriangleleft[C_1] \vartriangleleft [R]$
is a suitable chain, then $C_{\mathbf{i}}$ is a semidualizing $R$-module for each
$\mathbf{i}\subseteq [n]$.
\end{defn}

\begin{prop}\label{P1}
Assume that  $[C_n]\vartriangleleft \cdots \vartriangleleft[C_1] \vartriangleleft [C_0]$
is a suitable chain in $\mathfrak{G}_0(R)$.
\begin{itemize}
\item[(1)] \emph{(\cite[Lemma 4.5]{s-w1} and \cite[Remark 3.3]{ad})} For each sequence $\mathbf{i}=\{i_1, \cdots , i_j\}\subseteq [n]$,
the $R$-module $B_{\mathbf{i}}$ is a semidualizing.
\item[(2)] \emph{(\cite[Lemma 4.6]{s-w1} and \cite[Remark 3.3]{ad})} If $\mathbf{i}, \mathbf{s}\subseteq [n]$
are two sequences with $\mathbf{s}\subseteq \mathbf{i}$, then
$[B_{\mathbf{i}}] \trianglelefteq[B_{\mathbf{s}}]$ and
$\mathrm{Hom}_R(B_{\mathbf{s}}, B_{\mathbf{i}})\cong B_{\mathbf{i}\setminus \mathbf{s}}$.
\item[(3)] \emph{(\cite[Theorem 3.3]{s-w1} and \cite[Remark 3.3]{ad})}
$|\mathfrak{G}_0(R)|\geqslant |\big\{ [C_{\mathbf{i}}]\ |\ \mathbf{i}\subseteq [n]\big\}|=2^n$.
\item[(4)] \emph{(\cite[Theorem 4.7]{s-w1} and \cite[Remark 3.3]{ad})}
$\big\{ [B_{\mathbf{u}}]\ |\ \mathbf{u}\subseteq [n]\big\}=\big\{ [C_{\mathbf{i}}]\ |\ \mathbf{i}\subseteq [n]\big\}$.
\end{itemize}
\end{prop}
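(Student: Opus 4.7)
My plan is to establish the four parts in order, with (1) carrying the main technical weight; the remaining parts then follow by standard manipulations. First I would handle (1) by induction on $j=|\mathbf{i}|$. The base case $j=1$ is immediate, since each $B_i=\mathrm{Hom}_R(C_{i-1},C_i)$ is semidualizing by Fact 2.3 applied to $[C_i]\trianglelefteq[C_{i-1}]$. For the inductive step, write $B_{\mathbf{i}}\cong B_{\mathbf{i}\setminus\{i_j\}}\otimes_R B_{i_j}$; to conclude that it remains semidualizing, I would invoke the standard criterion that if $X$ and $Y$ are semidualizing with appropriate Tor- and Ext-vanishing between them, then $X\otimes_R Y$ is semidualizing. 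The hard part is extracting those vanishing conditions from the suitability hypothesis, which asserts that every iterated dual $C_{\mathbf{j}}$ is totally $C_t$-reflexive for $t\geqslant\max(\mathbf{j})$. Using the identifications in Fact 2.5 to translate between the $C_{\mathbf{j}}$ language and the $B_{\mathbf{u}}$ language, these reflexivity conditions supply exactly the Tor/Ext vanishing needed for the tensor-product criterion.

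Once (1) is in hand, (2) is a hom-tensor manipulation: $B_{\mathbf{i}}\cong B_{\mathbf{s}}\otimes_R B_{\mathbf{i}\setminus\mathbf{s}}$ by associativity and commutativity of tensor product, and then
$$\mathrm{Hom}_R(B_{\mathbf{s}},B_{\mathbf{i}})\cong\mathrm{Hom}_R(B_{\mathbf{s}},B_{\mathbf{s}})\otimes_R B_{\mathbf{i}\setminus\mathbf{s}}\cong B_{\mathbf{i}\setminus\mathbf{s}},$$
the first isomorphism using the Tor-vanishing together with the Ext-vanishing of $B_{\mathbf{s}}$ against itself, and the second using the homothety isomorphism $\chi^R_{B_{\mathbf{s}}}$. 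The assertion $[B_{\mathbf{i}}]\trianglelefteq[B_{\mathbf{s}}]$ requires verifying the biduality and Ext-vanishing conditions of Definition 2.1 for $B_{\mathbf{s}}$ with respect to $B_{\mathbf{i}}$, which proceeds by a parallel computation feeding back through (1).

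Parts (3) and (4) are then combinatorial. For (3), having produced $2^n$ candidate semidualizing modules $C_{\mathbf{i}}$ indexed by subsets of $[n]$, I would prove they are pairwise non-isomorphic by showing that any coincidence $[C_{\mathbf{i}}]=[C_{\mathbf{j}}]$ with $\mathbf{i}\neq\mathbf{j}$ forces some $[C_s]=[C_{s'}]$ with $s\neq s'$, contradicting the strictness of the original chain. For (4), I would iterate (2) to collapse the nested Hom in the definition of $C_{\mathbf{i}}$ into a tensor product: at each stage, $\mathrm{Hom}_R(B_1\otimes\cdots\otimes B_r,B_1\otimes\cdots\otimes B_s)\cong B_{r+1}\otimes\cdots\otimes B_s$ by (2), so induction on $j$ produces an explicit $\mathbf{u}\subseteq[n]$ with $C_{\mathbf{i}}\cong B_{\mathbf{u}}$; the induced map on subsets is a bijection, yielding equality of the two sets of isomorphism classes.
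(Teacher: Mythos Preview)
The paper does not actually prove this proposition: it is recorded as a summary of results from the cited references \cite{s-w1} and \cite{ad}, with no argument supplied in the text. Your outline is in the spirit of those references---induction on $|\mathbf{i}|$ for (1), an evaluation/hom--tensor computation for (2), a distinctness argument for (3), and an explicit subset bijection for (4)---so there is nothing to compare against within the paper itself.

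One caution on your sketch of (1): you appeal to ``the identifications in Fact~2.5 to translate between the $C_{\mathbf{j}}$ language and the $B_{\mathbf{u}}$ language'' in order to extract the Tor/Ext vanishing needed for the tensor-product criterion. Fact~2.5 only gives $C_i\cong B_1\otimes_R\cdots\otimes_R B_i$, not the general correspondence $C_{\mathbf{i}}\leftrightarrow B_{\mathbf{u}}$, which is precisely the content of (4). If you are not careful here the argument becomes circular. The way the cited proofs avoid this is to work directly with the reflexivity hypotheses: from $[C_t]\trianglelefteq[C_{\mathbf{j}}]$ one obtains $C_{\mathbf{j}}\otimes_R\mathrm{Hom}_R(C_{\mathbf{j}},C_t)\cong C_t$ together with the relevant Tor-vanishing, and repeated use of this (with Fact~2.5 for the $C_i$'s only) yields the vanishing between the individual $B_i$'s without presupposing (4). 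Your inductive scheme is fine once this point is made explicit.
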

For an $R$-module $M$, the $i$th {\it Bass number} of $M$ is the integer
$\mu^i_R(M)=\mathrm{rank}_k(\mathrm{Ext}^i_R(k, M))$, and the {\it Bass series}
of $M$ is the formal Laurent series $I^M_R(t)=\Sigma_{i\in \mathbb{Z}} \mu^i_R(M)t^i$.
The $i$th {\it Betti number} of $M$ is the integer
$\beta_i^R(M)=\mathrm{rank}_k(\mathrm{Ext}^i_R(M, k))=\mathrm{rank}_k(\mathrm{Tor}^R_i(k, M))$,
and the {\it Poincar$\acute{e}$ series} of $M$ is the formal Laurent series
$P_M^R(t)=\Sigma_{i\in \mathbb{Z}} \beta_i^R(M)t^i$.

\section{Results} \label{sec3}
First we investigate suitable chains modulo regular sequences.
\begin{prop}\label{P2}
Assume that $\mathbf{x}=x_1,\cdots, x_d$ is an $R$-regular sequence and
$[C_n]\vartriangleleft \cdots \vartriangleleft[C_1] \vartriangleleft [C_0]$
is a suitable chain in $\mathfrak{G}_0(R)$ of length $n$. Then
$[\overline{C}_n]\vartriangleleft \cdots \vartriangleleft[\overline{C}_1] \vartriangleleft [\overline{C}_0]$
is also a suitable chain in $\mathfrak{G}_0(\overline{R})$ of length $n$, where
$\overline{R}=R/\mathbf{x}R$ and $\overline{C}_i=\overline{R}\otimes_R C_i$ for $i=0,1,\cdots,n$.
\end{prop}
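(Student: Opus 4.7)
The plan rests on the standard principle that both semidualizing-ness and total reflexivity descend along reduction modulo a regular sequence. More precisely, if $M$ is a semidualizing $R$-module, then $\mathbf{x}$ is automatically $M$-regular (semidualizing modules have maximal depth), and $\overline{M}=M/\mathbf{x}M$ is semidualizing over $\overline{R}$. Moreover, whenever $\mathbf{x}$ is regular on both $M$ and $N$, the natural map $\overline{\mathrm{Hom}_R(M,N)}\to\mathrm{Hom}_{\overline{R}}(\overline{M},\overline{N})$ is an isomorphism, and $\mathrm{Ext}^{\geqslant 1}_R(M,N)=0$ implies $\mathrm{Ext}^{\geqslant 1}_{\overline{R}}(\overline{M},\overline{N})=0$; consequently total $N$-reflexivity of $M$ over $R$ descends to total $\overline{N}$-reflexivity of $\overline{M}$ over $\overline{R}$. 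By induction on $d$ it suffices to verify the case $d=1$, where each statement follows from the long exact Ext sequence attached to $0\to M\xrightarrow{x}M\to\overline{M}\to 0$.

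Armed with this, I would first apply the principle to each pair $(C_{i-1},C_i)$ for $i\in[n]$ to conclude that every $\overline{C}_i$ is semidualizing over $\overline{R}$ and that $[\overline{C}_i]\trianglelefteq[\overline{C}_{i-1}]$. To upgrade each $\trianglelefteq$ to $\vartriangleleft$, suppose $\overline{C}_i\cong\overline{C}_j$ for some $i<j$. Suitability of the original chain (take $\mathbf{i}=\{i\}$ and $t=j$ in Definition~\ref{suitable}) gives $[C_j]\trianglelefteq[C_i]$, so $B:=\mathrm{Hom}_R(C_i,C_j)$ is itself semidualizing over $R$. Base change identifies $\overline{B}$ with $\mathrm{End}_{\overline{R}}(\overline{C}_i)\cong\overline{R}$. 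By Nakayama $B$ is cyclic, and any cyclic semidualizing module is isomorphic to $R$ (since $R\cong\mathrm{End}_R(R/I)=R/I$ forces $I=0$). Double-dualizing with respect to $C_j$, using the total $C_j$-reflexivity of $C_i$ from suitability, then yields $C_i\cong C_j$, contradicting strictness of the original chain.

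Finally, to verify suitability of $[\overline{C}_n]\vartriangleleft\cdots\vartriangleleft[\overline{C}_0]$, I would show by induction on $|\mathbf{i}|$ that $\overline{C_{\mathbf{i}}}\cong\overline{C}_{\mathbf{i}}$, where the right-hand side is formed entirely over $\overline{R}$. The inductive step is a direct application of the base-change isomorphism for $\mathrm{Hom}$, valid because every intermediate module $C_0^{\dag_{C_{i_1}}\cdots\dag_{C_{i_k}}}$ is semidualizing (and hence $\mathbf{x}$-regular) by Definition~\ref{suitable}. The required total $\overline{C}_t$-reflexivity of $\overline{C}_{\mathbf{i}}$ for $i_j\leqslant t\leqslant n$ then transfers from the corresponding statement over $R$ via the descent principle of the first paragraph. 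I expect the only real technical effort to be keeping the $\mathrm{Hom}$ and Ext compatibilities cleanly in order; once those are in hand the rest is bookkeeping.
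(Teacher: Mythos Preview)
Your proposal is correct and follows essentially the same route as the paper: use standard descent results to show each $\overline{C}_i$ is semidualizing and that total reflexivity is preserved, then prove by induction on $|\mathbf{i}|$ that the $\overline{R}$-dual construction $\mathbf{C}_{\mathbf{i}}$ agrees with $\overline{C_{\mathbf{i}}}$ and inherits total $\overline{C}_t$-reflexivity. The only noteworthy difference is that for strictness the paper simply invokes \cite[Proposition~4.2.18]{s-w2}, whereas you supply a direct Nakayama argument showing $\overline{C}_i\cong\overline{C}_j$ forces $\mathrm{Hom}_R(C_i,C_j)\cong R$ and hence $C_i\cong C_j$; this is a pleasant self-contained elaboration of what the cited reference says, not a genuinely different idea.
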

\begin{proof}
By \cite[Theorem 5.1]{c2}, $\overline{C}_i$ is a semidualizing $\overline{R}$-module for all $i=0,\cdots, n$.
Also, by \cite[Theorem 5.10]{c2} and \cite[Proposition 4.2.18]{s-w2},
$[\overline{C}_i] \vartriangleleft [\overline{C}_{i-1}]$ for $i=1,\cdots, n$.
Therefore
$[\overline{C}_n]\vartriangleleft \cdots \vartriangleleft[\overline{C}_1] \vartriangleleft [\overline{C}_0]$
is a chain in $\mathfrak{G}_0(\overline{R})$ of length $n$.
Now we show that this chain is suitable.

For a semidualizing $\overline{R}$-module $C$, set ${(-)}^{\overline{\dag}_C}=\mathrm{Hom}_{\overline{R}}(-,  C)$.
For each sequence of integers $\mathbf{i} = \{i_1, \cdots , i_j\}$ such that
$j \geqslant 0$ and $1\leqslant i_1 <  \cdots  < i_j \leqslant n$, set
$\mathbf{C}_{\mathbf{i}}= \overline{C}_0^{\overline{\dag}_{\overline{C}_{i_1}}\overline{\dag}_{\overline{C}_{i_2}}\cdots\overline{\dag}_{\overline{C}_{i_j}}}$.
(When $j = 0$, set  $\mathbf{C}_{\mathbf{i}} = \mathbf{C}_\emptyset= \overline{C}_0=\overline{R}$).

By induction on $j$ we prove that
$$\mathbf{C}_{\mathbf{i}}=
\overline{C}_0^{\overline{\dag}_{\overline{C}_{i_1}}\overline{\dag}_{\overline{C}_{i_2}}\cdots
\overline{\dag}_{\overline{C}_{i_j}}}
\cong \overline{ C_0^{\dag_{C_{i_1}}\dag_{C_{i_2}}\cdots\dag_{C_{i_j}}}}
=\overline{R}\otimes_R C_0^{\dag_{C_{i_1}}\dag_{C_{i_2}}\cdots\dag_{C_{i_j}}}=\overline{C_{\mathbf{i}}}$$
and $\mathbf{C}_{\mathbf{i}}$ is totally $\overline{C}_t$-reflexive for all $\mathbf{i}$
and $t$ with $i_j\leqslant t \leqslant n$.

When $j=0$, there is nothing to prove. If $j=1$, then
$\mathbf{C}_{\mathbf{i}}=\mathrm{Hom}_{\overline{R}}(\overline{C}_0, \overline{C}_{i_1})\cong \overline{C_0^{\dag_{C_{i_1}}}}$.
As $[C_n]\vartriangleleft \cdots \vartriangleleft[C_1] \vartriangleleft [C_0]$ is a suitable chain,
$C_0^{\dag_{C_{i_1}}}$ is totally $C_t$-reflexive for all $i_1\leqslant t \leqslant n$.
Hence $\mathbf{C}_{\mathbf{i}}$ is totally $\overline{C}_t$-reflexive for all
$i_1\leqslant t \leqslant n$, by  \cite[Theorem 5.10]{c2}.

Let $j>1$. We have
$\mathbf{C}_{\mathbf{i}}= (\overline{C}_0^{\overline{\dag}_{\overline{C}_{i_1}}\overline{\dag}_{\overline{C}_{i_2}}\cdots
\overline{\dag}_{\overline{C}_{i_{j-1}}}})^{\overline{\dag}_{\overline{C}_{i_j}}}$
and by induction
$$\overline{C}_0^{\overline{\dag}_{\overline{C}_{i_1}}\overline{\dag}_{\overline{C}_{i_2}}\cdots
\overline{\dag}_{\overline{C}_{i_{j-1}}}}
\cong \overline{ C_0^{\dag_{C_{i_1}}\dag_{C_{i_2}}\cdots\dag_{C_{i_{j-1}}}}}$$
is totally $\overline{C}_t$-reflexive for all
$i_{j-1}\leqslant t \leqslant n$.
As $C_0^{\dag_{C_{i_1}}\dag_{C_{i_2}}\cdots\dag_{C_{i_{j-1}}}}$ is totally $C_{i_j}$-reflexive, we obtain
the isomorphism
$$\mathrm{Hom}_{\overline{R}}(\overline{ C_0^{\dag_{C_{i_1}}\dag_{C_{i_2}}\cdots\dag_{C_{i_{j-1}}}}}, \overline{C}_{i_j})
\cong \overline{\mathrm{Hom}_R(C_0^{\dag_{C_{i_1}}\dag_{C_{i_2}}\cdots\dag_{C_{i_{j-1}}}}, C_{i_j})}\,.$$
Therefore $\mathbf{C}_{\mathbf{i}}\cong \overline{ C_0^{\dag_{C_{i_1}}\dag_{C_{i_2}}\cdots\dag_{C_{i_j}}}}=\overline{C}_{\mathbf{i}}$
and $\mathbf{C}_{\mathbf{i}}$ is totally $\overline{C}_t$-reflexive for all $i_j\leqslant t \leqslant n$,
by \cite[Theorem 5.10]{c2}.
Thus
$[\overline{C}_n]\vartriangleleft \cdots \vartriangleleft[\overline{C}_1] \vartriangleleft [\overline{C}_0]$
is a suitable chain in $\mathfrak{G}_0(\overline{R})$.
\end{proof}

For the remaining part of this paper we assume that $(R, \mathfrak{m}, k)$ is an Artinian local ring
and that all modules are finite.

\begin{defn}\cite[Definitions 4.1 and 4.2]{gerko}
The modules $K_1, K_2, \cdots, K_n$ are said to be weakly $\mathrm{Tor}$-independent if
$\mathrm{amp}(\otimes^L_{1\leqslant i \leqslant n} K_i)=0$.
These modules are said to be strongly $\mathrm{Tor}$-independent if for any subset $\Lambda \subseteq [n]$
we have $\mathrm{amp}(\otimes^L_{ i \in \Lambda} K_i)=0$.
\end{defn}

In the case $n=2$ both notions are equivalent to the classical
$\mathrm{Tor}$-independence, i.e., to the condition that $\mathrm{Tor}^R_{>0}(K_1, K_2)=0$.

\begin{thm}\label{gerko} \cite[Theorem 4.5]{gerko}
If the modules $K_1, K_2, \cdots, K_n$ are non-free and strongly
$\mathrm{Tor}$-independent, then  $\mathfrak{m}^n\neq 0$. If, under the same conditions,
$\mathfrak{m}^{n+1}= 0$, then the Poincar$\acute{e}$ series of $k$ has the form
$1/\prod^n_{i=1} (1-d_i t)$ for some positive integers $d_i$.
\end{thm}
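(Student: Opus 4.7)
The plan hinges on multiplicativity of Poincar\'e series under strong Tor-independence. The hypothesis $\mathrm{amp}(\otimes^L_{i \in \Lambda} K_i) = 0$ for every $\Lambda \subseteq [n]$ is equivalent, by induction on $|\Lambda|$, to the vanishing $\mathrm{Tor}^R_{>0}(K_i,\, K_{i_1} \otimes_R \cdots \otimes_R K_{i_r}) = 0$ whenever $i \in [n]$ and $\{i_1, \ldots, i_r\} \subseteq [n] \setminus \{i\}$. Consequently the tensor product $F^{i_1} \otimes_R \cdots \otimes_R F^{i_r}$ of the minimal free resolutions $F^i \to K_i$ is itself a minimal free resolution of $K_{i_1} \otimes_R \cdots \otimes_R K_{i_r}$, and counting ranks degree-by-degree yields the key multiplicative identity
\begin{equation*}
P^R_{\otimes_{i \in \Lambda} K_i}(t) \; = \; \prod_{i \in \Lambda} P^R_{K_i}(t) \quad \text{for every } \Lambda \subseteq [n].
\end{equation*}

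For the first assertion I would argue by contradiction: assume $\mathfrak{m}^n = 0$ and deduce that some $K_i$ must be free. Over an Artinian local ring of Loewy length at most $n$, Poincar\'e series obey polynomial constraints of bounded degree: the model case $n = 2$ gives every non-free module the explicit form $P_M^R(t) = (\mu(M) + (\beta_1^R(M) - \mu(M) e) t)/(1 - e t)$ with $e = \dim_k \mathfrak{m}$. Substituting this into the multiplicativity identity for $\Lambda = \{1,2\}$ and clearing denominators, the coefficient of $t^2$ collapses to $\beta_1^R(K_1) \beta_1^R(K_2) = 0$, contradicting the non-freeness of both. The general $n$ should follow by an analogous (but more involved) structural presentation valid over Loewy length $\leq n$, combined with a degree-counting argument applied to $\Lambda = [n]$; alternatively by an induction in which a pair $(K_i, K_{i+1})$ is replaced by its tensor product, which is still non-free and still Tor-independent from the rest by the preceding paragraph.

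For the second assertion, Part 1 combined with $\mathfrak{m}^{n+1} = 0$ forces the Loewy length of $R$ to be exactly $n + 1$, so the polynomial constraints above are saturated. The plan is to exploit this rigidity to identify the reciprocal $1/P_k^R(t)$ as a polynomial of degree exactly $n$ and then to use multiplicativity to factor it. Concretely, I would show that in the boundary case each non-free $K_i$ contributes exactly one linear factor $(1 - d_i t)$ to $1/P_k^R(t)$, with $d_i$ a positive integer determined by the Betti data of $K_i$, whence the multiplicativity identity delivers $P_k^R(t) = 1/\prod_{i=1}^n (1 - d_i t)$. The main obstacle is this structural identification: proving that the contribution of each $K_i$ is genuinely a single linear factor with a positive integer coefficient is the crux, and I expect it to follow from a combination of the rigidity imposed by the exact Loewy length $n + 1$, the non-vanishing of $\beta_1^R(K_i)$, and an Avramov--Lescot-type analysis relating $P_{K_i}^R(t)$ to $P_k^R(t)$ via a change-of-rings spectral sequence.
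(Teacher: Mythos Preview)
The paper does not prove this theorem. It is stated as Theorem~\ref{gerko} with the citation \cite[Theorem~4.5]{gerko} and no accompanying proof environment; the result is simply imported from Gerko's paper and then invoked in the proof of Theorem~\ref{T1}. There is therefore no ``paper's own proof'' to compare your attempt against.

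That said, since you have written a sketch, a brief assessment: your opening paragraph on multiplicativity of Poincar\'e series under strong Tor-independence is correct and is indeed the engine of Gerko's argument. Beyond that, however, your proposal is a plan rather than a proof. For the first assertion you work out only the case $n=2$ and then assert that the general case ``should follow'' by an analogous but more involved argument, or alternatively by an induction whose details are not supplied; neither route is carried out. For the second assertion you explicitly flag the crux---that each $K_i$ contributes a single linear factor $(1-d_it)$ to $1/P_k^R(t)$---as ``the main obstacle'' and say only that you ``expect'' it to follow from a combination of rigidity and spectral-sequence considerations. Gerko's actual proof proceeds via a careful analysis of Hilbert series and Loewy filtrations of the modules $\otimes_{i\in\Lambda}K_i$, showing inductively that $\mathfrak{m}^{|\Lambda|}$ acts nontrivially on this tensor product; the Poincar\'e-series factorization in the boundary case then comes from an explicit identification of the $K_i$ with modules of a very restricted shape. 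If you want to reconstruct the argument, that is the direction to pursue rather than a change-of-rings spectral sequence.
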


\begin{conj}\label{conj}  \cite[Conjecture 4.6]{gerko}
If $[C_n]\vartriangleleft \cdots \vartriangleleft[C_1] \vartriangleleft [C_0]$
is a chain in $\mathfrak{G}_0(R)$ of length $n$,
then $\mathfrak{m}^n\neq 0$. If, under the same conditions, $\mathfrak{m}^{n+1}= 0$,
then the Poincar$\acute{\mathrm{e}}$ series of $k$ has the form
$1/\prod^n_{i=1} (1-d_i t)$ for some positive integers $d_i$.
\end{conj}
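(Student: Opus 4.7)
The plan is to derive this statement (at least for suitable chains, the case the paper ultimately settles as Theorem~\ref{T1}) from Gerko's Theorem~\ref{gerko} by exhibiting $n$ non-free, strongly $\mathrm{Tor}$-independent $R$-modules manufactured out of the chain. The natural candidates are the successive ``quotients'' $B_i = \mathrm{Hom}_R(C_{i-1}, C_i)$ from Notation~\ref{B_i}.

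First I would assemble the structural facts already at hand. By Proposition~\ref{P1}(1), every iterated tensor product $B_{\mathbf{i}} = \bigotimes_{i \in \mathbf{i}} B_i$ is a semidualizing $R$-module for every $\mathbf{i} \subseteq [n]$, and Proposition~\ref{P1}(2) further supplies the Hom-identification $\mathrm{Hom}_R(B_{\mathbf{s}}, B_{\mathbf{i}}) \cong B_{\mathbf{i} \setminus \mathbf{s}}$ whenever $\mathbf{s} \subseteq \mathbf{i}$. Next I would verify that each $B_i$ is non-free: if $B_i \cong R$, then the total $C_i$-reflexivity of $C_{i-1}$ (coming from $[C_i] \vartriangleleft [C_{i-1}]$) would yield $C_{i-1} \cong \mathrm{Hom}_R(\mathrm{Hom}_R(C_{i-1}, C_i), C_i) \cong \mathrm{Hom}_R(R, C_i) \cong C_i$, contradicting strictness of the chain.

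The main obstacle is establishing strong $\mathrm{Tor}$-independence: $\mathrm{amp}\bigl(\bigotimes^L_{i \in \mathbf{i}} B_i\bigr) = 0$ for every $\mathbf{i} \subseteq [n]$. My proposed route is induction on $j = |\mathbf{i}|$. Writing $\mathbf{i} = \{i_1 < \cdots < i_j\}$ and $\mathbf{i}' = \mathbf{i} \setminus \{i_j\}$, the inductive hypothesis gives $\bigotimes^L_{i \in \mathbf{i}'} B_i \simeq B_{\mathbf{i}'}$, itself a semidualizing module by Proposition~\ref{P1}(1). The inductive step then reduces to showing $\mathrm{Tor}^R_{\geqslant 1}(B_{\mathbf{i}'}, B_{i_j}) = 0$, or equivalently $B_{\mathbf{i}'} \otimes^L_R B_{i_j} \simeq B_{\mathbf{i}'} \otimes_R B_{i_j} = B_{\mathbf{i}}$. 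For this I would invoke the principle that if two semidualizing modules have a semidualizing ordinary tensor product, their higher $\mathrm{Tor}$'s automatically vanish---a fact one can extract by comparing the homothety isomorphism and the $\mathrm{Ext}$-vanishing enjoyed by $B_{\mathbf{i}}$ with those of the complex $B_{\mathbf{i}'} \otimes^L_R B_{i_j}$, using the Hom-identifications from Proposition~\ref{P1}(2) and the ascent/descent machinery of \cite{c2}. This kind of reasoning is essentially the content buried in the proof of \cite[Lemma 4.5]{s-w1}.

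Once non-freeness and strong $\mathrm{Tor}$-independence of $B_1, \ldots, B_n$ are in hand, Theorem~\ref{gerko} applies directly: it produces $\mathfrak{m}^n \neq 0$, and when $\mathfrak{m}^{n+1} = 0$ it delivers the desired factorization $P^R_k(t) = 1/\prod_{i=1}^{n}(1 - d_i t)$ with positive integers $d_i$. The whole argument thus pivots on Step~3, whose plausibility rests entirely on the semidualizing property propagating through every sub-tensor product $B_{\mathbf{i}}$ and forcing the derived tensor product to collapse to its zeroth homology.
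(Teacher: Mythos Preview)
Your proposal is correct and follows essentially the same route as the paper's proof of Theorem~\ref{T1}: manufacture the $B_i$'s, invoke Proposition~\ref{P1}(1) to see that every $B_{\mathbf{i}}$ is semidualizing, deduce strong $\mathrm{Tor}$-independence, and feed this into Theorem~\ref{gerko}. The paper compresses your inductive Step~3 into a single line, writing $\mathrm{amp}(B_{i_1}\otimes^L_R\cdots\otimes^L_R B_{i_j})=\mathrm{amp}(B_{\mathbf{i}})=0$ and relying on the fact (implicit in the cited \cite[Lemma 4.5]{s-w1}) that the semidualizing conclusion there already forces the derived and ordinary tensor products to agree; your explicit induction and your separate verification of non-freeness simply unpack what the paper leaves to the citations.
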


In the next result we prove the conjecture for suitable chains.

\begin{thm} \label{T1}
Let $[C_n]\vartriangleleft \cdots \vartriangleleft[C_1] \vartriangleleft [C_0]$
be a suitable chain in $\mathfrak{G}_0(R)$ of length $n$,
then $\mathfrak{m}^n\neq 0$. If, under the same conditions, $\mathfrak{m}^{n+1}= 0$,
then the Poincar$\acute{e}$ series of $k$ has the form
$1/\prod^n_{i=1} (1-d_i t)$ for some positive integers $d_i$.
\end{thm}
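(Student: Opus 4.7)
The plan is to reduce the statement to Gerko's Theorem~\ref{gerko} by extracting, from the suitable chain, a collection of $n$ non-free, strongly $\mathrm{Tor}$-independent semidualizing $R$-modules. With the notation of Notation~\ref{B_i}, the natural candidates are the modules $B_1,\ldots,B_n$, where $B_i=\mathrm{Hom}_R(C_{i-1},C_i)$; by Proposition~\ref{P1}(1), not only is each $B_i$ semidualizing, but so is every tensor product $B_{\mathbf{i}}=B_{i_1}\otimes_R\cdots\otimes_R B_{i_j}$.

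Non-freeness of each $B_i$ is easy: since over a local ring the only finite free semidualizing module is $R$ itself, it suffices to rule out $B_i\cong R$. But Fact~\ref{f1} gives $C_i\cong B_1\otimes_R\cdots\otimes_R B_i\cong C_{i-1}\otimes_R B_i$, so $B_i\cong R$ would force $C_i\cong C_{i-1}$, contradicting $[C_i]\neq[C_{i-1}]$.

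The main step is to prove strong $\mathrm{Tor}$-independence, that is, $\mathrm{amp}\bigl(\bigotimes^L_{i\in\Lambda}B_i\bigr)=0$ for every $\Lambda\subseteq[n]$. I would proceed by induction on $|\Lambda|$, with the cases $|\Lambda|\leqslant 1$ trivial. For the inductive step, pick any $k\in\Lambda$, set $\Lambda'=\Lambda\setminus\{k\}$, and use the induction hypothesis to identify $\bigotimes^L_{i\in\Lambda}B_i\simeq B_{\Lambda'}\otimes^L_R B_k$. Proposition~\ref{P1}(2) supplies $[B_\Lambda]\trianglelefteq[B_{\Lambda'}]$ together with $\mathrm{Hom}_R(B_{\Lambda'},B_\Lambda)\cong B_k$; because $B_{\Lambda'}$ is totally $B_\Lambda$-reflexive, this lifts to $\mathrm{RHom}_R(B_{\Lambda'},B_\Lambda)\simeq B_k$ in the derived category. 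The standard tensor-evaluation isomorphism for totally reflexive modules then yields
\[
B_k\otimes^L_R B_{\Lambda'} \;\simeq\; \mathrm{RHom}_R(B_{\Lambda'},B_\Lambda)\otimes^L_R B_{\Lambda'} \;\simeq\; B_\Lambda,
\]
which is a module concentrated in degree zero. This closes the induction.

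With strong $\mathrm{Tor}$-independence of the non-free semidualizings $B_1,\ldots,B_n$ established, both conclusions of the theorem follow immediately from Theorem~\ref{gerko}: the inequality $\mathfrak{m}^n\neq 0$ unconditionally, and the factorization $P^R_k(t)=1/\prod_{i=1}^n(1-d_it)$ with positive integers $d_i$ when $\mathfrak{m}^{n+1}=0$. The principal technical obstacle is the tensor-evaluation step in the induction, which is what converts the purely module-theoretic data of Proposition~\ref{P1} into the derived-category statement needed for Gerko's theorem; it is a standard consequence of total $C$-reflexivity, but one must take care to carry everything through at the derived level so that the inductive collapse from $\otimes^L$ to $\otimes$ is fully justified.
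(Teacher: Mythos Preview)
Your proposal is correct and follows exactly the paper's strategy: verify that $B_1,\ldots,B_n$ are non-free, strongly $\mathrm{Tor}$-independent semidualizing modules, then apply Theorem~\ref{gerko}. The paper is terser---it simply cites Proposition~\ref{P1}(1) and asserts $\mathrm{amp}(B_{i_1}\otimes^L_R\cdots\otimes^L_R B_{i_j})=\mathrm{amp}(B_{\mathbf{i}})=0$ directly---while you spell out the inductive justification; just note that your evaluation step $\mathrm{RHom}_R(B_{\Lambda'},B_\Lambda)\otimes^L_R B_{\Lambda'}\simeq B_\Lambda$ relies on $B_{\Lambda'}$ being \emph{semidualizing}, not merely totally $B_\Lambda$-reflexive.
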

\begin{proof}
The non-free semidualizing modules $B_1, B_2, \cdots, B_n$ are  strongly
$\mathrm{Tor}$-independent, where $B_i$ is as in Notation~\ref{B_i}, for each $i\in [n]$.
Indeed, by Proposition~\ref{P1}, for each sequence of integers
$\mathbf{i} = \{i_1, \cdots , i_j\}$
with $j \geqslant 1$ and $1\leqslant i_1 <  \cdots  < i_j \leqslant n$,
$B_{\mathbf{i}}=B_{i_1}\otimes_R\cdots \otimes_R B_{i_j}$ is a semidualizing $R$-module.
Thus $\mathrm{amp}(B_{i_1}\otimes^L_R\cdots \otimes^L_R B_{i_j})=\mathrm{amp}(B_{\mathbf{i}})=0$.
Therefore the assertion concludes by Theorem~\ref{gerko}.
\end{proof}

Note that Theorem~\ref{T1} implies \cite[Theorem 4.8]{gerko}. Indeed, for $n\leqslant 3$ any chain
of the form $[D]=[C_n]\vartriangleleft \cdots \vartriangleleft[C_1] \vartriangleleft [C_0]=[R]$, where
$D$ is  dualizing,   is also a suitable chain of length $n$.
At this moment we do not know whether a chain for which
Conjecture~\ref{conj} holds true, is a suitable chain.
However we guess that, if $[C_n]\vartriangleleft \cdots \vartriangleleft[C_1] \vartriangleleft [C_0]$
is a chain in $\mathfrak{G}_0(R)$ such that
the semidualizing modules $B_1, \cdots, B_n$ are strongly $\mathrm{Tor}$-independent,
then this chain is suitable.

\begin{rem}
When $(S, \mathfrak{n})$ is a Cohen-Macaulay local ring with dimension $d$ and
$[C_n]\vartriangleleft \cdots \vartriangleleft[C_1] \vartriangleleft [C_0]$
is a suitable chain in $\mathfrak{G}_0(S)$ of length $n$, then, by Proposition~\ref{P2},
$[\overline{C}_n]\vartriangleleft \cdots \vartriangleleft[\overline{C}_1] \vartriangleleft [\overline{C}_0]$
is also a suitable chain in $\mathfrak{G}_0(\overline{S})$ of length $n$, where
$\overline{S}=S/\mathbf{x}S$, $\overline{C}_i=\overline{S}\otimes_S C_i$ and $\mathbf{x}=x_1,\cdots, x_d$ is an
$S$-regular sequence. 
If $\overline{\mathfrak{n}}^{n+1}=0$, then by Theorem~\ref{T1}, the Poincar$\acute{\mathrm{e}}$ series of
$S/\mathfrak{n}\cong \overline{S}/\overline{\mathfrak{n}}$ has the form
$1/\prod^n_{i=1} (1-d_i t)$ for some positive integers $d_i$.

Also, one may see that the set $\mathfrak{G}_0(S)$ can not contain suitable chains of arbitrary
length. Indeed, the length of a suitable chain in $\mathfrak{G}_0(S)$ is less than the number
$l=\mathrm{min}\,\{\,i> 0\ |\  \overline{\mathfrak{n}}^{i}=0\,\}$.
In \cite{s-w3}, some other upper bounds for the length of chains of semidualizing modules are given.
\end{rem}

\begin{defn} \cite[Definition 4.9]{gerko}
An Artinian ring $R$ is called $SD(n)$-full if the following conditions are satisfied.
\begin{itemize}
\item[(i)] $\mathfrak{m}^{n+1}= 0$.
\item[(ii)] There are strongly Tor-independent non-free semidualizing modules \\
$K_1, K_2, \cdots, K_n$  such that for any subset $\Lambda \subseteq [n]$ the module
$\otimes_{i\in \Lambda} K_i$ \\
is semidualizing.
\end{itemize}
\end{defn}

\begin{thm}\label{T2}
An Artinian ring $R$ is $SD(n)$-full if and only if there is a suitable chain
in $\mathfrak{G}_0(R)$ of length $n$ and $\mathfrak{m}^{n+1}= 0$.
\end{thm}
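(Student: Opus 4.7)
The proof is a biconditional, handled by two separate constructions.

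\textbf{Forward direction (suitable chain $\Rightarrow$ $SD(n)$-full):} Given a suitable chain $[C_n] \vartriangleleft \cdots \vartriangleleft [C_0] = [R]$ with $\mathfrak{m}^{n+1} = 0$, I would take $K_i := B_i = \mathrm{Hom}_R(C_{i-1}, C_i)$ for $i \in [n]$. Proposition~\ref{P1}(1) provides that each $\bigotimes_{j \in \Lambda} K_j = B_{\Lambda}$ is semidualizing for every $\Lambda \subseteq [n]$. Each $K_i$ is non-free since, by Fact~\ref{f1}, $K_i \cong R$ would collapse $C_i \cong C_{i-1}$. Strong $\mathrm{Tor}$-independence is then inherited from semidualizing-ness of the $B_{\Lambda}$'s, exactly as in the proof of Theorem~\ref{T1}: $\mathrm{amp}(\bigotimes^L_{j \in \Lambda} K_j) = \mathrm{amp}(B_{\Lambda}) = 0$.

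\textbf{Reverse direction ($SD(n)$-full $\Rightarrow$ suitable chain):} With $K_1, \ldots, K_n$ witnessing $SD(n)$-fullness, set $C_0 := R$ and $C_i := K_1 \otimes_R \cdots \otimes_R K_i$. By hypothesis each $C_i$ is semidualizing. The central technical input I would invoke is the following consequence of the general theory of semidualizing modules (cf.\ \cite{c2} and \cite{gerko}): if $B, C$ are semidualizing $R$-modules with $\mathrm{Tor}^R_{\geqslant 1}(B, C) = 0$ and $B \otimes_R C$ is semidualizing, then $\mathrm{Hom}_R(B, B \otimes_R C) \cong C$, $\mathrm{Ext}^{\geqslant 1}_R(B, B \otimes_R C) = 0$, and $B$ is totally $(B \otimes_R C)$-reflexive. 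Strong $\mathrm{Tor}$-independence forces $\mathrm{Tor}^R_{\geqslant 1}(C_{i-1}, K_i) = 0$, so this applies with $B = C_{i-1}$, $C = K_i$, producing $[C_i] \trianglelefteq [C_{i-1}]$ and $\mathrm{Hom}_R(C_{i-1}, C_i) \cong K_i$. Strictness follows since $[C_i] = [C_{i-1}]$ would force $K_i \cong \mathrm{Hom}_R(C_{i-1}, C_{i-1}) \cong R$, contradicting non-freeness.

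To verify suitability, I would show by induction on $j = |\mathbf{i}|$ that $C_{\mathbf{i}} \cong \bigotimes_{s \in \Lambda_{\mathbf{i}}} K_s$ for an explicit $\Lambda_{\mathbf{i}} \subseteq \{1, \ldots, i_j\}$; the inductive rule is that appending a new maximum $i_{j+1}$ to $\mathbf{i}$ replaces $\Lambda_{\mathbf{i}}$ by $\{1, \ldots, i_{j+1}\} \setminus \Lambda_{\mathbf{i}}$ (matching Fact~\ref{f1} and Proposition~\ref{P1}(4)), and each inductive step uses the key fact to rewrite the iterated Hom as a tensor product of $K$'s. Given such an expression for $C_{\mathbf{i}}$ and any $t \geqslant i_j$, one writes $C_t = C_{\mathbf{i}} \otimes_R P$ with $P := \bigotimes_{s \in \{1, \ldots, t\} \setminus \Lambda_{\mathbf{i}}} K_s$; strong $\mathrm{Tor}$-independence again gives $\mathrm{Tor}^R_{\geqslant 1}(C_{\mathbf{i}}, P) = 0$, and $C_t$ is semidualizing, so one further application of the key fact delivers total $C_t$-reflexivity of $C_{\mathbf{i}}$.

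The principal obstacle is the technical fact at the heart of the reverse direction: extracting the full package of total $(B \otimes_R C)$-reflexivity of $B$ from the mere semidualizing-ness of $B \otimes_R C$ together with $\mathrm{Tor}$-vanishing. This is secured by the Auslander/Bass-class and Foxby-equivalence machinery for semidualizing modules. Once that lemma is in hand, the combinatorial tracking of the subsets $\Lambda_{\mathbf{i}}$ is routine.
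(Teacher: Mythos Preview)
Your proposal is correct and follows essentially the same route as the paper. In both directions your constructions coincide with the paper's ($K_i = B_i$ going forward; $C_j = K_1 \otimes_R \cdots \otimes_R K_j$ going backward), and the ``key fact'' you isolate is precisely the content of \cite[Proposition~3.5]{gerko} that the paper invokes; your explicit bookkeeping with the sets $\Lambda_{\mathbf{i}}$ (updated by $\Lambda \mapsto \{1,\ldots,i_{j+1}\}\setminus\Lambda$) is exactly the pattern the paper indicates by ``proceeding in this way.''
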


\begin{proof}
First we assume that $\mathfrak{m}^{n+1}= 0$ and
$[C_n]\vartriangleleft \cdots \vartriangleleft[C_1] \vartriangleleft [C_0]$
is a suitable chain in $\mathfrak{G}_0(R)$ of length $n$.
Then, by Proposition~\ref{P1},
the non-free semidualizing modules $B_1, B_2, \cdots, B_n$ are  strongly
$\mathrm{Tor}$-independent and,
for each subset $\mathbf{i}=\{i_1, \cdots , i_j\}\subseteq [n]$,
$B_{\mathbf{i}}=B_{i_1}\otimes_R\cdots \otimes_R B_{i_j}$ is a semidualizing $R$-module.
Therefore $R$ is $SD(n)$-full.

For the converse, assume that  $K_1, K_2, \cdots, K_n$ are strongly $\mathrm{Tor}$-independent
non-free semidualizing modules such that for any subset $\Lambda \subseteq [n]$ the module
$\otimes_{i\in \Lambda} K_i$ is semidualizing.
Set $C_0=R$ and $C_j=\otimes_{1\leqslant i\leqslant j} K_i$ for each $j\in [n]$.
By \cite[Proposition 3.5]{gerko}, it can be seen that the semidualizing modules
$C_0,C_1,\cdots, C_n$ form a chain in $\mathfrak{G}_0(R)$ and then
the sequence
\begin{equation}\label{e1}
[C_n]\vartriangleleft [C_{n-1}]\vartriangleleft\cdots \vartriangleleft[C_1] \vartriangleleft [C_0]
\end{equation}
is a chain in $\mathfrak{G}_0(R)$ of length $n$.
We show that (\ref{e1}) is a suitable chain.
For each sequence $\mathbf{i}=\{i_1, \cdots , i_j\}\subseteq [n]$
such that $j \geqslant 0$ and $1\leqslant i_1 <  \cdots  < i_j \leqslant n$, we have
$C_{\mathbf{i}}= C_0^{\dag_{C_{i_1}}\dag_{C_{i_2}}\cdots\dag_{C_{i_j}}}$.
(When $j = 0$, set  $C_{\mathbf{i}} = C_\emptyset= C_0$ ).
Note that for each $t$, $i_j\leqslant t\leqslant n$,
$$C_t=\otimes_{1\leqslant l\leqslant t} K_l=(\otimes_{1\leqslant l\leqslant i_1} K_l)
\otimes_R (\otimes_{i_1< l\leqslant i_2} K_l)\otimes_R \cdots \otimes_R(\otimes_{i_{j-1}< l\leqslant i_j} K_l)
 \otimes_R (\otimes_{i_j< l\leqslant t} K_l).$$
On the other hand, by \cite[Proposition 3.5]{gerko}, we have
$$C_0^{\dag_{C_{i_1}}\dag_{C_{i_2}}}=\mathrm{Hom}_R(\otimes_{1\leqslant l\leqslant i_1} K_l,
\otimes_{1\leqslant l\leqslant i_2} K_l) \cong \otimes_{i_1< l\leqslant i_2} K_l$$
and so
$$C_0^{\dag_{C_{i_1}}\dag_{C_{i_2}}\dag_{C_{i_3}}}
=\mathrm{Hom}_R(\otimes_{i_1< l\leqslant i_2} K_l, \otimes_{1\leqslant l\leqslant i_3} K_l)
\cong (\otimes_{1\leqslant l\leqslant i_1} K_l)\otimes_R(\otimes_{i_2< l\leqslant i_3} K_l).$$
By proceeding in this way and using \cite[Proposition 3.5]{gerko}, one may see that
$C_{\mathbf{i}}$ is totally $C_t$-reflexive. Thus the sequence (\ref{e1}) is a suitable chain.
\end{proof}

\begin{exmp}
A ring with $\mathfrak{m}^{n+1}= 0$ is $SD(n)$-full if and only if
there is a suitable chain in $\mathfrak{G}_0(R)$ of length $n$.

Let $F$ be a field. Set $S_i=F\ltimes F^{a_i}$ for all $1\leqslant i\leqslant n$, where $a_i>1$.
Then $S_i$ is an Artinian ring with dualizing module $D_i=\mathrm{Hom}_F(S_i, F)$.
As $\mathrm{type}\ S_i=a_i\neq 1$, the ring $S_i$ is not Gorenstein.
Set $S=\otimes_F^{1\leqslant i \leqslant n} S_i$.
By \cite[Example 4.11]{gerko}, $S$ is $SD(n)$-full.
Now, we construct a suitable chain of length $n$ in $\mathfrak{G}_0(S)$.
If $K_i$ and $M_i$ are $S_i$-modules such that $K_i$ is finite, then there is $S$-module isomorphism
$\mathrm{Hom}_{S}(\otimes_F^{1\leqslant i \leqslant n} K_i, \otimes_F^{1\leqslant i \leqslant n} M_i)
\cong \otimes_F^{1\leqslant i \leqslant n}\mathrm{Hom}_{S_i}(K_i, M_i)$, by \cite[Proposition A.1.5]{s-w2}.
When $K_i$ is semidualizing, by \cite[Proposition 2.3.6]{s-w2}, $\otimes_F^{1\leqslant i \leqslant n} K_i$
is also a semidualizing $S$-module, and $\otimes_F^{1\leqslant i \leqslant n} M_i$ is
totally $\otimes_F^{1\leqslant i \leqslant n} K_i$-reflexive by \cite[Proposition 5.3.3]{s-w2},
whenever $M_i$ is totally $K_i$-reflexive.
Note that if we take, for each $i$, a module  $K_i\in \{S_i, D_i\}$, then
the module  $\otimes_F^{1\leqslant i \leqslant n} K_i$ is semidualizing $S$-module.
Set $C_0=S$ and
$C_j=(\otimes_F^{1\leqslant i \leqslant j} D_i)\otimes_F(\otimes_F^{j< i \leqslant n} S_i)$
for all $1\leqslant j\leqslant n$.
Therefore we obtain the chain
$[C_n]\vartriangleleft \cdots \vartriangleleft[C_1] \vartriangleleft [C_0]$
in $\mathfrak{G}_0(S)$ of length $n$. We show that this chain is suitable.
Assume that $\mathbf{u}=\{u_1, \cdots, u_j\}$ is a sequence of integers such that
$1\leqslant u_1<\cdots < u_j\leqslant n$ and
$C_{\mathbf{u}}=C_0^{\dag_{C_{u_1}}\cdots \dag_{C_{u_j}}}$.
We have $C_0^{\dag_{C_{u_1}}}\cong C_{u_1}$,
$$C_0^{\dag_{C_{u_1}} \dag_{C_{u_2}}}\cong \mathrm{Hom}_S(C_{u_1}, C_{u_2})
\cong (\otimes_F^{1\leqslant i \leqslant u_1} S_i)\otimes_F
(\otimes_F^{u_1< i \leqslant u_2} D_i)\otimes_F
(\otimes_F^{u_2< i \leqslant n} S_i)\,,\  \mathrm{and}$$
$$\begin{array}{llll}
C_0^{\dag_{C_{u_1}} \dag_{C_{u_2}}\dag_{C_{u_3}}}& = &
\mathrm{Hom}_S(C_0^{\dag_{C_{u_1}} \dag_{C_{u_2}}}, C_{u_3})\\
 & \cong  &
(\otimes_F^{1\leqslant i \leqslant u_1} D_i)\otimes_F
(\otimes_F^{u_1< i \leqslant u_2} S_i)\otimes_F
(\otimes_F^{u_2< i \leqslant u_3} D_i)\otimes_F
(\otimes_F^{u_3< i \leqslant n} S_i).
\end{array}$$
By proceeding in this way one obtains the following isomorphism
{\small $$C_{\mathbf{u}}\cong\left\{ \begin{array}{ll}
\hspace{-0.25cm}(\otimes_F^{1\leqslant i \leqslant u_1} S_i)\otimes_F
(\otimes_F^{u_1< i \leqslant u_2} D_i)\otimes_F\cdots \otimes_F
(\otimes_F^{u_{j-1}< i \leqslant u_j} D_i)\otimes_F
(\otimes_F^{u_j< i \leqslant n} S_i) & \mathrm{if}\ j\ \mathrm{is\ even}, \\
  &   \\
\hspace{-0.25cm}(\otimes_F^{1\leqslant i \leqslant u_1} D_i)\otimes_F
(\otimes_F^{u_1< i \leqslant u_2} S_i)\otimes_F\cdots \otimes_F
(\otimes_F^{u_{j-1}< i \leqslant u_j} D_i)\otimes_F
(\otimes_F^{u_j< i \leqslant n} S_i)  &  \mathrm{if}\ j\ \mathrm{is\ odd}.
\end{array} \right.$$ }
As $C_t=(\otimes_F^{1\leqslant i \leqslant t} D_i)\otimes_F(\otimes_F^{t< i \leqslant n} S_i)$,
one can see that $C_{\mathbf{u}}$ is totally $C_t$-reflexive for all $u_j\leqslant t\leqslant n$.
Hence $[C_n]\vartriangleleft \cdots \vartriangleleft[C_1] \vartriangleleft [C_0]$
is a suitable chain in $\mathfrak{G}_0(S)$.

Note that
$B_j=\mathrm{Hom}_S(C_{j-1}, C_j)\cong S_1\otimes_F \cdots
\otimes_F S_{j-1}\otimes_F D_j \otimes_F S_{j+1} \otimes_F \cdots \otimes_F S_n$
for all $1\leqslant j\leqslant n$.
The semidualizing modules $B_1, \cdots, B_n$ are strongly $\mathrm{Tor}$-independent
and for any subset $\Lambda \subseteq [n]$ the module
$\otimes_{i\in \Lambda} B_i$ is semidualizing.
\end{exmp}

The next result follows from \cite[Proposition 5.4]{gerko} and the proof of Theorem~\ref{T2}.

\begin{lem}\label{Bass series 1}
Let $R$ be an Artinian ring with $\mathfrak{m}^{n+1}=0$.
Assume that
$[C_n]\vartriangleleft \cdots \vartriangleleft[C_1] \vartriangleleft [C_0]$
is a suitable chain in $\mathfrak{G}_0(R)$ of length $n$.
Then for each $i\in[n]$ the Poincar$\acute{e}$ series of $B_i$ is
$P_{B_i}^R(t)= (\beta_0^R(B_i)-t)/(1-\beta_0^R(B_i)t)$
and the Bass series of $B_{[n]\backslash i}$ is
$I^{B_{[n]\backslash i}}_R(t)= (\mu^0_R(B_{[n]\backslash i})-t)/(1-\mu^0_R(B_{[n]\backslash i})t)$.
\end{lem}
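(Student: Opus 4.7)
The plan is to reduce the lemma to a direct invocation of \cite[Proposition~5.4]{gerko}, once the $SD(n)$-full structure has been extracted from the given suitable chain.

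First, I would revisit the construction used in the proof of Theorem~\ref{T2}. From the suitable chain $[C_n]\vartriangleleft \cdots \vartriangleleft[C_0]$, together with Proposition~\ref{P1}, the modules $B_1,\dots,B_n$ of Notation~\ref{B_i} are non-free strongly $\mathrm{Tor}$-independent semidualizing $R$-modules, and for every subset $\Lambda\subseteq[n]$ the tensor product $\otimes_{j\in\Lambda}B_j=B_{\Lambda}$ is again semidualizing. Concretely, the strong Tor-independence is the observation already exploited in the proof of Theorem~\ref{T1}, namely $\mathrm{amp}(B_{i_1}\otimes_R^L\cdots\otimes_R^L B_{i_j})=\mathrm{amp}(B_{\mathbf{i}})=0$ because $B_{\mathbf{i}}$ is a module. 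Thus the collection $\{B_i\}_{i\in[n]}$ certifies, together with the hypothesis $\mathfrak{m}^{n+1}=0$, that $R$ is $SD(n)$-full.

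Next, I would apply \cite[Proposition~5.4]{gerko} to this family. That proposition takes as input precisely an $SD(n)$-full configuration and produces, for each $i\in[n]$, the rational expressions for the Poincar\'e series of the $i$th member of the family and for the Bass series of the complementary tensor product of the remaining $n-1$ members. Setting $d_i=\beta_0^R(B_i)$ and $e_i=\mu^0_R(B_{[n]\setminus i})$ and reading off the output of Proposition~5.4 applied to $B_1,\dots,B_n$ gives the two identities asserted in the lemma.

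I do not expect a genuine obstacle; the content of the lemma is essentially a repackaging of \cite[Proposition~5.4]{gerko} in the language of suitable chains. The one point that demands care is the verification that \emph{every} $B_\Lambda$ (not only the individual $B_i$) is semidualizing, since Gerko's proposition requires the full $SD(n)$-full hypothesis rather than mere Tor-independence; this stronger statement is exactly Proposition~\ref{P1}(1), which in turn rests on the suitability of the chain via the identification $\{[B_{\mathbf{u}}]\}=\{[C_{\mathbf{i}}]\}$ of Proposition~\ref{P1}(4).
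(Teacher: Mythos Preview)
Your proposal is correct and matches the paper's own approach exactly: the paper simply states that the lemma follows from \cite[Proposition~5.4]{gerko} together with the proof of Theorem~\ref{T2}, which is precisely the reduction you outline via Proposition~\ref{P1} and the resulting $SD(n)$-full structure on the $B_i$.
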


\begin{prop}\label{Bass series}
Let $R$ be an Artinian ring with $\mathfrak{m}^{n+1}=0$.
Assume that
$[C_n]\vartriangleleft \cdots \vartriangleleft[C_1] \vartriangleleft [C_0]$
is a suitable chain in $\mathfrak{G}_0(R)$ of length $n$.
Then for each $i\in[n]$ the Poincar$\acute{e}$ series of $C_i$ is
$$P_{C_i}^R(t)= {\prod_{j=1}^i (\beta_0^R(B_j)-t)\over\prod_{j=1}^i(1-\beta_0^R(B_j)t)}\ ,$$
$I^{C_n}_R(t)=1$, and for $i\neq n$, the Bass series of $C_i$ is
$$I^{C_i}_R(t)= {\prod_{j=i+1}^n (\beta_0^R(B_j)-t)\over\prod_{j=i+1}^n(1-\beta_0^R(B_j)t)}
={\prod_{j=i+1}^n (\mu^0_R(B_{[n]\backslash j})-t)\over\prod_{j=i+1}^n(1-\mu^0_R(B_{[n]\backslash j})t)}\ .$$
\end{prop}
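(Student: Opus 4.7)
The plan is to reduce the Poincar\'e and Bass series of $C_i$ to those of the atomic factors $B_j$, using two ingredients: a K\"unneth-type multiplicativity for tensor products of $\mathrm{Tor}$-independent modules, and Matlis duality on the Artinian ring $R$, in which the role of $E_R(k)$ is played by $C_n$. The one genuinely nontrivial input is that $C_n$ is a dualizing module for $R$; I would extract this by feeding the strongly $\mathrm{Tor}$-independent, $SD(n)$-full family produced in the proof of Theorem~\ref{T2} into \cite[Proposition 5.4]{gerko}, whose conclusion forces $\mathrm{id}_R(C_n)<\infty$, hence $C_n\cong E_R(k)$.

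For the Poincar\'e series, Fact~\ref{f1} gives $C_i\cong B_1\otimes_R\cdots\otimes_R B_i$, and the proof of Theorem~\ref{T1} shows that $B_1,\ldots,B_n$ are strongly $\mathrm{Tor}$-independent. A standard K\"unneth argument---based on the identification $k\otimes^L_R(M\otimes_R N)\simeq(k\otimes^L_R M)\otimes_k(k\otimes^L_R N)$, valid whenever $\mathrm{Tor}^R_{\geqslant 1}(M,N)=0$, together with the K\"unneth formula over the field $k$---yields $P^R_{M\otimes_R N}(t)=P^R_M(t)\cdot P^R_N(t)$ in that situation. Iterating gives $P_{C_i}^R(t)=\prod_{j=1}^{i} P_{B_j}^R(t)$, and substituting Lemma~\ref{Bass series 1} produces the asserted formula.

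For the Bass series, once $C_n\cong E_R(k)$ is available, Matlis duality yields the identity $I^M_R(t)=P^R_{\mathrm{Hom}_R(M,C_n)}(t)$ for every finite $R$-module $M$ (via the adjunction $\mathrm{Ext}^i_R(k,\mathrm{Hom}_R(N,E))\cong\mathrm{Hom}_R(\mathrm{Tor}^R_i(k,N),E)$ applied with $E=C_n$, together with $\mathrm{Hom}_R(k,C_n)=k$). Applied to $M=C_i=B_{\{1,\ldots,i\}}$, Proposition~\ref{P1}(2) identifies $\mathrm{Hom}_R(C_i,C_n)\cong B_{\{i+1,\ldots,n\}}$, and the multiplicativity of the previous paragraph gives
\[
I^{C_i}_R(t)=P^R_{B_{\{i+1,\ldots,n\}}}(t)=\prod_{j=i+1}^{n}P^R_{B_j}(t).
\]
For $i=n$ the index set is empty, $\mathrm{Hom}_R(C_n,C_n)\cong R$, and the formula collapses to $P^R_R(t)=1$. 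The alternative form of $I^{C_i}_R(t)$ follows by the same duality applied to $M=B_{[n]\backslash j}$: Proposition~\ref{P1}(2) gives $\mathrm{Hom}_R(B_{[n]\backslash j},C_n)\cong B_j$, so $I^{B_{[n]\backslash j}}_R(t)=P^R_{B_j}(t)$, and matching this equality against Lemma~\ref{Bass series 1} forces $\mu^0_R(B_{[n]\backslash j})=\beta_0^R(B_j)$.

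The main obstacle will be establishing that $C_n$ is dualizing; once $\mathrm{id}_R(C_n)<\infty$ is in hand, everything else is bookkeeping with Proposition~\ref{P1}, Lemma~\ref{Bass series 1}, the K\"unneth identity, and Matlis duality. The step I would therefore spell out most carefully is the verification that the family $B_1,\ldots,B_n$ coming from the suitable chain satisfies the hypotheses of \cite[Proposition 5.4]{gerko} in the form needed to force $C_n=B_{[n]}$ to be dualizing.
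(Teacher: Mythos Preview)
Your proposal is correct and follows essentially the same route as the paper: decompose $C_i$ as $B_1\otimes_R\cdots\otimes_R B_i$ via Fact~\ref{f1}, use multiplicativity of Poincar\'e series over Tor-independent tensor factors (the paper cites \cite[Lemma 1.5.3]{af} where you invoke K\"unneth), then identify $C_i\cong\mathrm{Hom}_R(B_{\{i+1,\ldots,n\}},C_n)$ via Proposition~\ref{P1}(2) and use that $C_n$ is dualizing to convert the Bass series of $C_i$ into a Poincar\'e series (the paper again cites \cite[Lemma 1.5.3]{af}, which specializes to your Matlis-duality identity when the target is injective).

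The one place where the paper is more efficient is the point you flag as the main obstacle. Rather than extracting $\mathrm{id}_R(C_n)<\infty$ from \cite[Proposition 5.4]{gerko}, the paper argues by maximality: Theorem~\ref{T1} forces any suitable chain to have length at most $n$ when $\mathfrak{m}^{n+1}=0$, and since $R$ is Artinian it has a dualizing module $D$; if $[D]\neq[C_n]$ then $[D]\vartriangleleft[C_n]\vartriangleleft\cdots\vartriangleleft[C_0]$ is a suitable chain of length $n+1$ (every finite module is totally $D$-reflexive, so the extra suitability conditions are automatic), a contradiction. This bypasses any careful verification of the hypotheses of \cite[Proposition 5.4]{gerko}.
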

\begin{proof}
As $\mathfrak{m}^{n+1}=0$, this suitable chain is of maximum length, by Theorem~\ref{T1}.
Thus the module $C_n$ is the dualizing $R$-module. Hence the Bass series of $C_n$ is $I^{C_n}_R(t)=1$.
For $i\in[n]$, we have $C_i\cong B_1\otimes_R\cdots\otimes_R B_i$, by Fact~\ref{f1}.
Therefore $P_{C_i}^R(t)=P_{B_1}^R(t)P_{B_2}^R(t)\cdots P_{B_i}^R(t)$, by \cite[Lemma 1.5.3]{af}.
Hence, by Lemma~\ref{Bass series 1}, one gets
$$P_{C_i}^R(t)=({\beta_0^R(B_1)-t\over 1-\beta_0^R(B_1)t})({\beta_0^R(B_2)-t\over 1-\beta_0^R(B_2)t})
\cdots ({\beta_0^R(B_i)-t\over 1-\beta_0^R(B_i)t})\,.$$
Also, we have $C_i\cong B_1\otimes_R\cdots\otimes_R B_i\cong \mathrm{Hom}_R(B_{\{i+1,\cdots, n\}}, B_{[n]})$,
by Proposition~\ref{P1}. Thus, by \cite[Lemma 1.5.3]{af}, one gets
$$I^{C_i}_R(t)=P_{B_{\{i+1,\cdots, n\}}}^R(t) I_R^{B_{[n]}}(t)
=P_{B_{i+1}}^R(t)\cdots P_{B_{n}}^R(t) I_R^{B_{[n]}}(t)\,.$$
As $B_{[n]}=B_1\otimes_R\cdots\otimes_R B_n\cong C_n$,
one has $I_R^{B_{[n]}}(t)=I_R^{C_n}(t)=1$.
Therefore, by Lemma~\ref{Bass series 1},
$$I^{C_i}_R(t)=P_{B_{i+1}}^R(t)\cdots P_{B_{n}}^R(t)=({\beta_0^R(B_{i+1})-t\over 1-\beta_0^R(B_{i+1})t})
\cdots ({\beta_0^R(B_n)-t\over 1-\beta_0^R(B_n)t})\,.$$
By Proposition~\ref{P1},  $B_{[n]\backslash j}\cong\mathrm{Hom}_R(B_j, B_{[n]})$
and so $I_R^{B_{[n]\backslash j}}(t)=P_{B_j}^R(t)I_R^{B_{[n]}}(t)=P_{B_j}^R(t)$.
Note that $\mu^0(B_{[n]\backslash j})=\beta_0(B_j)$ and then the final equality follows.
\end{proof}

From Proposition~\ref{Bass series}, the Bass series of $R$ is
$$I_R(t)=I_R^{\mathrm{Hom}_R(C_1, C_1)}(t)=P_{C_1}^R(t)I_R^{C_1}(t)
=I^{B_{[n]\backslash 1}}_R(t)I_R^{C_1}(t)
={\prod_{j=1}^n (\mu^0_R(B_{[n]\backslash j})-t)\over\prod_{j=1}^n(1-\mu^0_R(B_{[n]\backslash j})t)}\,, \ \mathrm{and}$$
$$I_R(t)=P_{C_n}^R(t)
={\prod_{j=1}^n (\beta_0^R(B_j)-t)\over\prod_{j=1}^n(1-\beta_0^R(B_j)t)}\ .$$
For each $i\in[n]$,
$I^{B_{[n]\backslash i}}_R(t)=\alpha_i+(\alpha_i^2-1)t+ \alpha_i(\alpha_i^2-1)t^2+\alpha_i^2(\alpha_i^2-1)t^3+\cdots$,
where $\alpha_i=\mu^0_R(B_{[n]\backslash i})$ (see \cite[Propositions 5.1, 5.2 and 5.4]{gerko}).
Thus $\{\mu^j_R(B_{[n]\backslash i})\}$ is strictly increasing
(indeed, it has exponential growth, since
$\mu^{j}_R(B_{[n]\backslash i})\geqslant \alpha_i^j$ for all $j\geqslant 1$).
Therefore $\{\mu^j_R(R)\}$ is also strictly increasing whenever $n\geqslant 1$, since
$I_R(t)=I^{B_{[n]\backslash 1}}_R(t)I^{B_{[n]\backslash 2}}_R(t)\cdots I^{B_{[n]\backslash n}}_R(t)$.
Also, similar to the proof of \cite[Theorem 3.5]{s-w3}, it is easy to see that
$\{\mu^j_R(R)\}$ is bounded below by a polynomial in $j$ of degree $n-1$.

\begin{rem}\label{remark Bass}
Assume that $(S, \mathfrak{n})$ is a Cohen-Macaulay local ring with dimension $d$ and $\mathbf{x}=x_1,\cdots, x_d$
is an $S$-regular sequence. Set $\overline{S}=S/\mathbf{x}S$.
As $\mathrm{Ext}_S^i(S/\mathfrak{n}, S)\cong \mathrm{Ext}_{\overline{S}}^{i-d}(S/\mathfrak{n}, \overline{S})$
for all $i\geqslant d$, we have $\mu^i_S(S)=\mu^{i-d}_{\overline{S}}(\overline{S})$,
for all $i\geqslant d$. Thus we get $I_S(t)=t^d I_{\overline{S}}(t)$.
Now, if $[C_n]\vartriangleleft \cdots \vartriangleleft[C_1] \vartriangleleft [C_0]$
is a suitable chain in $\mathfrak{G}_0(S)$ of length $n$ and
$\overline{\mathfrak{n}}^{n+1}=0$, then, by Proposition~\ref{P2} and
the above discussion,  $I_S(t)$ has a very specific form and $\{\mu^i_S(S) \}$
is strictly increasing.
\end{rem}




\begin{thebibliography}{10}

\bibitem{ad} E. Amanzadeh and M. T. Dibaei, {\it Presentations of rings with a chain of semidualizing modules}, Math. Scand., to appear.


\bibitem{af} L. L. Avramov and  H. B. Foxby, {\it Ring homomorphisms and finite Gornestein dimension},
Proc. London Math. Soc. {\bf 75} (1997), 241--270.


\bibitem{bs-wy} K. Borna Lorestani, S. Sather-Wagstaff, and S. Yassemi, {\it Rings that are homologically of minimal multiplicity},
Comm. Algebra {\bf 39} (2011), 782--807.


\bibitem{c2} L. W. Christensen,  {\it Semi-dualizing complexes and their Auslander categories},
Trans. Amer. Math. Soc. {\bf 353} (2001), 1839--1883.

\bibitem{csv} L. W. Christensen, J. Striuli, and O. Veliche, {\it Growth in the minimal injective resolution of a local ring},
J. London Math. Soc. {\bf 81} (2010) 24--44.


\bibitem{f} H. B. Foxby,  \emph{Gorenstein modules and related modules},  Math. Scand. {\bf 31} (1972),  267--284.


\bibitem{fs-w1} A. Frankild and S. Sather-Wagstaff, {\it Reflexivity and ring homomorphisms of finite flat dimension},
Comm. Algebra {\bf 35} (2007),  461--500.


\bibitem{gerko} A. Gerko, {\it On the structure of the set of semidualizing complexes}, Illinois J. Math. {\bf 48}
(2004), 965--976.

\bibitem{g}
E. S. Golod, {\it G-dimension and generalized perfect ideals}, Trudy Mat. Inst. Steklov. {\bf 165} (1984), 62-66.

\bibitem{h} R. Hartshorne, \emph{Residues and duality}, Lecture Notes in Mathematics, No. 20, Springer-Verlag,
Berlin, 1966.


\bibitem{hj}  H. Holm and P. J{\o}rgensen, \emph{Semi-dualizing modules and related Gorenstein homological dimensions},
Journal of Pure and Applied Algebra {\bf 205} (2006), 423--445.


\bibitem{jl} D. A. Jorgensen and  G. J. Leuschke, {\it On the growth of the Betti sequence of the canonical module},
Math. Z. {\bf 256} (2007), 647--659.

\bibitem{jls-w} D. A. Jorgensen, G. J. Leuschke, and S. Sather-Wagstaff,
{\it Presentations of Rings with Non-Trivial Semidualizing Modules},
Collect. Math. {\bf 63} (2012), 165--180.


\bibitem{reiten} I. Reiten, \emph{The converse to a theorem of Sharp on Gorenstein modules},
Proc. Amer. Math. Soc. {\bf 32} (1972), 417--420.


\bibitem{s-w3} S. Sather-Wagstaff,  {\it Bass numbers and semidualizing complexes},
Commutative Algebra and its Applications, Walter de Gruyter. Berlin. New York. (2009), 349--381.


\bibitem{s-w1} S. Sather-Wagstaff, {\it Lower bounds for the number of semidualizing complexes
over a local ring},  Math. Scand. {\bf 110} (2012), 5--17.


\bibitem{s-w2} S. Sather-Wagstaff,  {\it Semidualizing modules},  http://www.ndsu.edu/pubweb/\~ssatherw/DOCS/sdm.pdf


\bibitem{s} R. Y. Sharp, \emph{Finitely generated modules of finite iniective dimension over certain Cohen-Macaulay rings},
Proc. London Math. Soc. {\bf 25} (1972), 303--328.

\bibitem{v} W. V. Vasconcelos,  {\it Divisor theory in module categories},  North-Holland Math. Stud., vol. {\bf 14},
North-Holland Publishing Co.,  Amsterdam,  1974.


\end{thebibliography}
\end{document}